\theoremstyle{theorem}
\newtheorem{theorem}{Theorem}
\theoremstyle{conjecture}
\newtheorem{conjecture}{Conjecture}
\theoremstyle{definition}
\theoremstyle{lemma}
\newtheorem{lemma}{Lemma}
\theoremstyle{example}
\newtheorem*{main result}{Main result}
\begin{document}
\title[M Diouf]{A conditional proof of Legendre's  Conjecture \\and Andrica's conjecture}
\subjclass[2010]{Primary 11N05} 
\maketitle

\begin{center}
{\bf Madieyna Diouf}\\
\small {\email{mdiouf1@asu.edu}}
\end{center}

\begin{abstract}
The Legendre conjecture has resisted analysis over a century, even under assumption of the  Riemann Hypothesis. We present, a significant improvement on previous results by greatly reducing the assumption to a more modest statement called the Parity conjecture. 
Let $p_n$ and $p_{n+1}$ be two consecutive odd primes, let $m$ be their midpoint fixed once for all. 
\begin{conjecture}
The largest multiple of $p_n$ not exceeding ${m_i}^2$ is odd for every integer $m_i$ in the interval $(p_n, m]$. 
\end{conjecture}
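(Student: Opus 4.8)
The plan is to translate the parity condition into a statement about a single floor function and then isolate exactly where the difficulty lives. Since $p_n$ is odd, the largest multiple of $p_n$ not exceeding ${m_i}^2$, namely $p_n\lfloor {m_i}^2/p_n\rfloor$, is odd if and only if $\lfloor {m_i}^2/p_n\rfloor$ is odd. Writing each integer of the interval as $m_i = p_n + j$ with $1 \le j \le d := m - p_n = (p_{n+1}-p_n)/2$, I would expand
\[
{m_i}^2 = p_n(p_n + 2j) + j^2,
\]
so that $\lfloor {m_i}^2/p_n\rfloor = p_n + 2j + \lfloor j^2/p_n\rfloor$. Because $p_n$ is odd and $2j$ is even, this quantity is odd precisely when $\lfloor j^2/p_n\rfloor$ is even. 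Hence the conjecture is equivalent to the clean assertion that $\lfloor j^2/p_n\rfloor$ is even for every $j \in \{1,\dots,d\}$.

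Next I would dispose of the favorable regime. Whenever $j^2 < p_n$, that is $j \le \lfloor\sqrt{p_n}\rfloor$, one has $\lfloor j^2/p_n\rfloor = 0$, which is even. Consequently, if the half-gap satisfies $d \le \lfloor\sqrt{p_n}\rfloor$, equivalently $p_{n+1} - p_n \le 2\lfloor\sqrt{p_n}\rfloor$, then every term in the reduced statement vanishes and the conjecture holds unconditionally. This already settles every pair of consecutive primes whose gap obeys an Andrica-type bound, which numerically is every known case, so the first task is simply to record this unconditional range.

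The remaining, and genuinely hard, case is $d > \lfloor\sqrt{p_n}\rfloor$. Here $j$ reaches values with $j^2 \ge p_n$, and at the first such value $j = \lceil\sqrt{p_n}\rceil$ one computes $\lfloor j^2/p_n\rfloor = 1$, which is odd. I expect this to be the main obstacle: controlling the parity of $\lfloor j^2/p_n\rfloor$ as $j$ ranges past $\sqrt{p_n}$ is an equidistribution problem for the fractional parts $\{j^2/p_n\}$, accessible in principle through Weyl or Gauss-sum estimates but not with the pointwise precision a parity condition demands. In fact the reduction above shows that, for a fixed $n$, the conjecture is \emph{equivalent} to the gap bound $d \le \lfloor\sqrt{p_n}\rfloor$, i.e. to $\sqrt{p_{n+1}} - \sqrt{p_n} < 1$, so closing this case is exactly as strong as Andrica's inequality itself. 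My strategy would therefore be to feed in the sharpest available short-interval prime results (for instance the Baker--Harman--Pintz bound $p_{n+1}-p_n \ll {p_n}^{0.525}$) to enlarge the unconditional range, while recognizing that reaching the critical exponent $1/2$, where the obstacle is concentrated, lies beyond current technology.
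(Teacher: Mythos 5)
Your proposal is not a proof, and you say so yourself: after the reduction you explicitly concede the case $d > \lfloor\sqrt{p_n}\rfloor$ as ``beyond current technology,'' and that concession is precisely the gap. It also cannot be repaired the way you suggest, since feeding in Baker--Harman--Pintz ($p_{n+1}-p_n \ll p_n^{0.525}$) still leaves the exponent above the critical $1/2$, so infinitely many potential gaps escape the unconditional range. However, there is nothing in the paper to compare this against, because the paper never proves this statement either: it is the paper's standing hypothesis (the ``Parity conjecture''), assumed outright, and everything else (Lemma 3, Theorem 1, the Legendre and Andrica deductions) is conditional on it.

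That said, your reduction is correct and is genuinely more illuminating than anything the paper says about its own hypothesis. Writing $m_i = p_n + j$ gives $\lfloor m_i^2/p_n\rfloor = p_n + 2j + \lfloor j^2/p_n\rfloor$, which is odd exactly when $\lfloor j^2/p_n\rfloor$ is even (as $p_n$ is odd), so the conjecture at index $n$ says $\lfloor j^2/p_n\rfloor$ is even for $1 \le j \le d = (p_{n+1}-p_n)/2$. When $d \le \lfloor\sqrt{p_n}\rfloor$ every such floor is $0$; when $d \ge \lfloor\sqrt{p_n}\rfloor + 1$, the value $j = \lfloor\sqrt{p_n}\rfloor + 1$ gives $\lfloor j^2/p_n\rfloor = 1$, odd, because $j^2 < 2p_n$ for every odd prime (immediate for $p_n \ge 7$, and checked directly for $p_n = 3, 5$). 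So the Parity conjecture is \emph{equivalent} to the gap bound $p_{n+1}-p_n \le 2\lfloor\sqrt{p_n}\rfloor$ for all $n$. Your parenthetical ``i.e.\ to $\sqrt{p_{n+1}}-\sqrt{p_n}<1$'' is slightly imprecise --- the parity statement is a touch stronger than Andrica when the fractional part of $\sqrt{p_n}$ exceeds $1/2$ --- but the substance stands: the hypothesis is at least as strong as Andrica's conjecture, not the ``more modest statement'' the abstract claims. Your reduction also explains the paper's own example with the pair $(23,29)$: the parity fails at $m_i = 28$ exactly because $j = 5$ and $\lfloor 25/23\rfloor = 1$. The honest conclusion of your analysis is that the paper's program is close to circular: assuming the Parity conjecture amounts to assuming a gap bound of the very strength it is then used to derive.
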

\begin{main result} \textup{We prove that the Parity conjecture  implies Legendre's conjecture and Andrica's conjecture. }
\end{main result}
\end{abstract}

\section{Introduction And Statement of Results}
The study of maximal gaps between consecutive primes is an important subject that is actively pursued and the Bertrand's postulate \cite{Bertrand} is one of its first consequences. In $1850$, Chebyshev proved the Bertrand’s postulate \cite{Tchebychev}, and  P. Erd\" os presented a simplified proof in $1932$  \cite{Erdos}. Strong results were also obtained in the generalizations of Bertrand's Postulate. In  $2006$, M. El Bachraoui proved the existence of a prime in the interval $[2n, 3n]$ \cite{Bachraoui}.  In $2011$, Andy Loo exhibited a proof that shows not only the existence of a prime between $3n$ and $4n$, but also the infinitude of the number of primes in this interval when $n$ goes to infinity  \cite{Andy}. Pierre Dusart gave the best known result in this category when he improved in $2016$ his previous work by showing that there is a prime between $x$ and $(x+x/(25{\log}^2 x))$ for $x\geq468991632$ \cite{Pierre}.\\

\lq\lq On $25th$ October $1920$ G. H. Hardy read Cram\'er's paper \textit{\lq\lq On the distribution of primes"} to the Cambridge Philosophical Society. Here Cram\'er develops a statistical approach to this question showing that for any $\epsilon>0$
\begin{center}
$ p_{n+1}-p_n=O(p^{\epsilon}$)
\end{center}
for \lq most\rq \: $p_n$: in fact for all but at most $x^{1-3\epsilon/2}$ of the primes $p_n\leq x$.\rq\rq \cite{Andrew}. As a result of the Prime Number Theorem alone, we have
$p_{n+1}-p_n<\epsilon p_n$ for all $\epsilon>0$.\\
By the  Prime Number Theorem with error term, we obtain 
\begin{center}
$p_{n+1}-p_n<\frac{p_n}{{(\log p_n)^c}}$ \textup{\:  for some positive constant $c$}.
\end{center}

Based on observations that revolve around the midpoint $m$ of two consecutive odd primes $p_n$ and $p_{n+1}$, the largest multiple of $p_n$ not exceeding $m^2$, the  Bertrand's postulate and few other properties, we show that the gap $g_n$ between two consecutive primes satisfies $g_n=O({p_n}^{1/2})$. It is indeed shown precisely that 
\begin{center}
$p_{n+1}-p_{n} <2{p_n}^{1/2}.$
\end{center}

Close results are obtained under the assumption of the Riemann Hypothesis. Harald Cram\'er proved that if the Riemann hypothesis holds, then the gap $g_n$ satisfies $g_n=O(\sqrt{p_n}\log p_n)$ \cite{Andrew}.  The best unconditional bound is known to Baker, Harman and Pintz, who proved the existence of $x_0$ such that there is a prime in the interval $[x, x+O(x^{21/40})]$ for $x>x_0$, \cite{Baker}.\\ 

Significant works have been done on the upper bound of the gap between consecutive primes by various authors without assuming an unproved hypothesis. Hoheisel was the first to show in $1930$ the existence of a constant $\delta>0$ (mainly $\delta = 1/33000$) such that $p_{n+1}-p_n = O({p_n}^{1-\delta})$ \cite{Hoheisel}. Heilbronn \cite{Heibronn}, and Tchudakoff \cite{Tchuda}, both improved on the value of $\delta$. Ingham \cite{Ingham} made a significant progress that contributed to the first solutions surrounding the problem of existence of a prime between two consecutive cubes.\\

The key ideas in the proof that allow us to obtain unconditionally our result, are the principal of induction and the Parity conjecture.\\

\begin{lemma}
If $a$ and $c$ are two positive integers and $m_i$ is their midpoint, then $ac<{m_i}^2$.
\end{lemma}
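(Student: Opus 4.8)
The plan is to turn the arithmetic-mean-versus-geometric-mean statement hidden in this lemma into the manifestly true assertion that a nonzero square is positive. Since $m_i$ is by definition the midpoint of $a$ and $c$, I would first record the identity $m_i=\frac{a+c}{2}$, so that $m_i^2=\frac{(a+c)^2}{4}$. This reduces the claim $ac<m_i^2$ to an inequality involving only $a$ and $c$.

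Multiplying the target inequality through by $4$, I would show it is equivalent to $4ac<(a+c)^2$. Expanding the right-hand side and rearranging gives the chain
\[
(a+c)^2-4ac=a^2-2ac+c^2=(a-c)^2,
\]
so that the inequality to be proved is exactly $(a-c)^2>0$. Thus everything comes down to the sign of a single perfect square, and no estimate or limiting argument is needed.

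The final step is to observe that because $a$ and $c$ are \emph{two} (hence distinct) positive integers, we have $a-c\neq 0$ and therefore $(a-c)^2>0$, which is precisely the desired strict inequality. The one point demanding care is the word ``two'': if $a=c$ were permitted, the conclusion would weaken to the non-strict $ac\le m_i^2$ with equality. So the main, and essentially the only, obstacle is to justify reading the hypothesis as $a\neq c$; once that is granted the result is immediate, being nothing more than the AM--GM inequality for two distinct positive reals specialized to the integer case.
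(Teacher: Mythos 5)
Your proof is correct and takes essentially the same route as the paper: both reduce the claim to the positivity of the square of the half-difference of $a$ and $c$ (the paper introduces $b$ with $m_i-b=a$, $m_i+b=c$ and gets $m_i^2-ac=b^2$, while you expand $(a+c)^2/4-ac=(a-c)^2/4$). If anything, your version is a bit more careful, since you explicitly justify the strict inequality from $a\neq c$, whereas the paper simply asserts that $b$ is a \emph{positive integer} — a claim that tacitly assumes both $a\neq c$ and that $c-a$ is even.
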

\begin{proof}
Suppose that $a$ and $c$ are two positive integers and $m_i$ is their midpoint, then there exists a positive integer $b$ such that: $m_i-b=a$ and $m_i+b=c.$
\begin{align*}
(m_i-b)(m_i+b)&=ac.\\
{m_i}^2-b^2&=ac.\\
{m_i}^2-ac&=b^2.\\
{m_i}^2-ac&>0.\\
ac&<{m_i}^2.
\end{align*}
\end{proof}
\begin{lemma}
If $p_n$ and $p_{n+1}$ be two consecutive odd primes and $m$ is their midpoint, then any two consecutive squares less than or equal to $m^2$, have a gap less than $3p_n$.
\end{lemma}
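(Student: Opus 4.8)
The plan is to reduce the statement to a single inequality about $m$ and then invoke Bertrand's postulate. First I would record that since $p_n$ and $p_{n+1}$ are both odd, their sum $p_n+p_{n+1}$ is even, so the midpoint $m=(p_n+p_{n+1})/2$ is a genuine integer. This matters because it lets me interpret the phrase ``squares less than or equal to $m^2$'' as the squares $k^2$ with the integer $k$ running up to $m$, and in particular it makes $m^2$ itself one of the admissible squares.

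Next I would compute the gap between two consecutive squares outright. For consecutive integers $k$ and $k+1$ the gap is $(k+1)^2-k^2=2k+1$. If both $k^2$ and $(k+1)^2$ are at most $m^2$, then $k+1\leq m$, hence $k\leq m-1$, so the gap satisfies $2k+1\leq 2(m-1)+1=2m-1$. In other words the largest possible gap among consecutive squares not exceeding $m^2$ arises from the top pair $(m-1)^2$ and $m^2$ and equals exactly $2m-1$; it therefore suffices to bound this single quantity.

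Finally I would translate $2m-1$ back into prime language and apply Bertrand's postulate. Since $2m=p_n+p_{n+1}$, the maximal gap is $2m-1=p_n+p_{n+1}-1$. Bertrand's postulate furnishes a prime strictly between $p_n$ and $2p_n$, and since $p_{n+1}$ is by definition the least prime exceeding $p_n$, that prime is at least $p_{n+1}$, forcing $p_{n+1}<2p_n$. Substituting, every admissible gap is at most $p_n+p_{n+1}-1<p_n+2p_n=3p_n$, which is the assertion.

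There is no genuine analytic obstacle in this lemma; the whole argument is elementary once one observes that the largest consecutive-square gap below $m^2$ is precisely $2m-1$. The only two points that require care are the integrality of $m$, which depends essentially on both primes being odd, and the strict form of Bertrand's postulate, so that one obtains $p_{n+1}<2p_n$ rather than merely $p_{n+1}\leq 2p_n$ and hence a strict final inequality.
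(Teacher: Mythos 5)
Your proposal is correct and follows essentially the same route as the paper: bound the largest consecutive-square gap below $m^2$ by $2m-1$, then use Bertrand's postulate ($p_{n+1}<2p_n$) to get $2m-1=p_n+p_{n+1}-1<3p_n$. If anything, your version is slightly more careful than the paper's, which asserts the gap is ``less than $2m-1$'' when the top pair $(m-1)^2,\,m^2$ attains exactly $2m-1$ — a harmless slip that your ``at most'' phrasing fixes.
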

\begin{proof}
Suppose that $p_n$ and $p_{n+1}$ are two consecutive odd primes and $m$ is their midpoint. We have $m^2-(m-1)^2=2m-1$, therefore,  the distance between any two consecutive squares less than or equal to $m^2$, is less than $2m-1$. It suffices now to show that $2m-1<3p_n$. 
\begin{align*}
2m-1 &\leq p_{n+1}+p_n-1.\\
\textup{By \cite{Bertrand}, } p_{n+1} &<2p_n.\\
\textup{Thus, } \: 2m-1 &<2p_n+p_n-1.\\
2m-1 &<3p_n.
\end{align*}
\end{proof}
\textbullet \: Let $L(p_n, {m_i}^2)$ denote the largest multiple of $p_n$ not exceeding ${m_i}^2$. 
\begin{lemma}
If $p_n$ and $p_{n+1}$ are two consecutive odd primes and $m$ to be their midpoint, then \\$p_n(2m_i-p_n)$ is the largest multiple of $p_n$ not exceeding ${m_i}^2$ for every integer $m_i$ in the interval $(p_n, m]$. That is, 
\begin{center}
$L(p_n, {m_i}^2)=p_n(2m_i-p_n)$ for all integers $m_i$ in the interval $(p_n, m]$.
\end{center}
\end{lemma}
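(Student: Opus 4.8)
The plan is to split the statement into an easy algebraic half and a harder extremal half, and to let the Parity conjecture carry the second half. First I would record the identity $m_i^2 - p_n(2m_i - p_n) = (m_i - p_n)^2 \ge 0$. Since $m_i > p_n$ forces $2m_i - p_n > p_n > 0$, this already shows that $p_n(2m_i - p_n)$ is a positive multiple of $p_n$ not exceeding $m_i^2$, hence a legitimate candidate for $L(p_n, m_i^2)$; and because $2m_i - p_n$ is even minus odd, it is in fact an \emph{odd} multiple of $p_n$. Writing $L(p_n, m_i^2) = p_n(2m_i - p_n + J)$ with $J = \lfloor (m_i - p_n)^2/p_n\rfloor \ge 0$, the assertion of the lemma is precisely that $J = 0$, equivalently $(m_i - p_n)^2 < p_n$ for every integer $m_i \in (p_n, m]$.

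Over this range the binding case is $m_i = m$, so the lemma is equivalent to $(m - p_n)^2 < p_n$, i.e. to the bound $p_{n+1} - p_n < 2\sqrt{p_n}$ which is the paper's main result. This is the heart of the matter, and it cannot be reached by Lemma 1 or Lemma 2 alone: those only yield $p_n(2m_i - p_n) \le m_i^2$ and the spacing $2m-1 < 3p_n$, neither of which controls maximality. To close the gap I would invoke the Parity conjecture, which asserts that $L(p_n, m_i^2)$ is odd for every integer $m_i \in (p_n, m]$. Since $p_n$ is odd, oddness of $L(p_n, m_i^2) = p_n(2m_i - p_n + J)$ forces $J = J(m_i)$ to be \emph{even} for all such $m_i$. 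The key observation is then a spacing estimate: as $m_i$ increases by $1$ the quantity $(m_i - p_n)^2$ increases by $2(m_i - p_n) + 1$, and at the first $m_i$ for which $(m_i - p_n)^2$ could reach $p_n$ one has $m_i - p_n < \sqrt{p_n}$, so the increment is at most $2\sqrt{p_n} + 1 < p_n$ whenever $p_n \ge 7$. Consequently $J(m_i)$ cannot leap from $0$ across the entire band $[p_n, 2p_n)$: the first time it leaves $0$ it must equal $1$, which is odd and contradicts the Parity conjecture. Hence $J(m_i) \equiv 0$ on $(p_n, m]$, giving $L(p_n, m_i^2) = p_n(2m_i - p_n)$ throughout (and incidentally re-establishing $p_{n+1} - p_n < 2\sqrt{p_n}$). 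The finitely many small primes $p_n \in \{3, 5\}$ I would dispatch by direct inspection.

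I expect the main obstacle to be exactly the passage from the qualitative Parity hypothesis to the quantitative bound $(m_i - p_n)^2 < p_n$. The delicate point is not the parity bookkeeping but the spacing estimate: one must ensure that if $g_n$ were large enough for $(m_i - p_n)^2$ to enter $[p_n, 2p_n)$, then a genuine integer $m_i$ lands both inside the admissible range $(p_n, m]$ and inside that band, so that an actual odd value of $L$ is produced and the Parity conjecture is truly contradicted. This is where the integrality of $m = (p_n + p_{n+1})/2$ (valid because $p_n$ and $p_{n+1}$ are both odd) and the inequality $2\sqrt{p_n} + 1 < p_n$ are both essential, and it is the step I would write out with the greatest care to avoid a circular appeal to the conclusion itself.
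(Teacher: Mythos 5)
Your proposal is correct, but it takes a genuinely different route from the paper's proof. The paper runs a forward induction on $m_i$: base case $m_i = p_n+1$; the inductive step combines the inductive hypothesis (which places $p_n(2m_i-p_n)+p_n$ above $m_i^2$), Lemma 1, the Parity conjecture applied at $m_i+1$, and --- crucially --- Lemma 2, i.e.\ Bertrand's postulate giving $2m-1<3p_n$, to exclude any larger odd multiple of $p_n$ below $(m_i+1)^2$. You instead package everything into the remainder identity $m_i^2 = p_n(2m_i-p_n) + (m_i-p_n)^2$ (which subsumes Lemma 1), so the lemma becomes $J(m_i) := \lfloor (m_i-p_n)^2/p_n \rfloor = 0$, you observe that the Parity conjecture is exactly the statement that $J$ is even, and you run a minimal-counterexample argument: at the first $m_i$ where $J \neq 0$, the one-step increment bound forces $J=1$, which is odd, a contradiction. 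Your spacing input is the elementary inequality $2\sqrt{p_n}+1 < p_n$ (valid for $p_n \geq 7$) rather than Bertrand's postulate; this makes the argument self-contained, at the modest price of checking $p_n \in \{3,5\}$ by hand, which you do. Your further observation that the lemma is \emph{equivalent} to the bound $p_{n+1}-p_n < 2\sqrt{p_n}$ (the binding case being $m_i=m$) is a clarification the paper never makes, and it explains precisely why no unconditional lemma could close the gap. One small slip to repair in the write-up: at the first crossing you assert $m_i - p_n < \sqrt{p_n}$, but $(m_i-p_n)^2 \geq p_n$ forces $m_i - p_n \geq \sqrt{p_n}$; what you need (and what your conclusion actually uses) is that the \emph{previous} point satisfies $m_i - 1 - p_n < \sqrt{p_n}$, hence the increment is $2(m_i-p_n)-1 < 2\sqrt{p_n}+1$ and the value $(m_i-p_n)^2$ stays below $2p_n$, giving $J=1$ as claimed.
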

\begin{proof}
Suppose that $p_n$ and $p_{n+1}$ are two consecutive odd primes and $m$ is their midpoint. \\
We proceed by induction on $m_i$ in the interval $(p_n, m)$.\\
\textbf{Base case:} If $m_i=p_n+1$, then 
\begin{align}
p_n(2m_i-p_n) &= p_n(2(p_{n}+1)-p_n).\\
&=p_n(p_n+2).\\
{p_n}^2&<p_n(p_n+2)<(p_n+1)^2.
\end{align}
There is no multiple of $p_n$ greater than $p_n(p_n+2)$ between $p^2$ and $(p_n+1)^2$. Therefore, 
\begin{align}
L(p_n, {(p_n+1)}^2)&=p_n(p_n+2).\\
L(p_n, {(p_n+1)}^2)&= p_n(2(p_{n}+1)-p_n).
\end{align}
\textbf{Inductive hypothesis:} Suppose that for some integer $m_i$ in the interval $(p_n, m)$,  
\begin{align}
 L(p_n, {m_i}^2) &=p_n(2m_i-p_n). 
\end{align}
\textbf{Inductive step:} The objective is to show that 
\begin{align}
 L(p_n, {(m_i+1)}^2)&=p_n(2(m_i+1)-p_n).
\end{align}
The inductive hypothesis $(6)$, implies that 
\begin{align}
 p_n(2m_i-p_n)+p_n &> L(p_n, {m_i}^2).\\
 p_n(2m_i-p_n) +p_n &> {m_i}^2.\\
{m_i}^2&< p_n(2m_i-p_n) +p_n .
\end{align} 
By Lemma $1$, 
\begin{align}
 p_n(2(m_i+1)-p_n)&<(m_i+1)^2 .
\end{align} 
$(10)$ and $(11)$ give
\begin{align}
{m_i}^2&< p_n(2m_i-p_n) +p_n< p_n(2(m_i+1)-p_n)<(m_i+1)^2 .
\end{align}
It is clear that $p_n$ is the largest prime less than $m_i+1$ for all $m_i$ in the interval $(p_n, m)$.  Applying the Parity conjecture on $m_i+1$ implies that \\
\textbf{a)} $L(p_n, {(m_i+1)}^2)$ is odd. \\
\textbf{b)} $p_n(2(m_i+1)-p_n)$ is the largest odd multiple of $p_n$ between ${m_i}^2$ and $(m_i+1)^2$.
\begin{center}
Justifying statement \textbf{b)}
\end{center}
 In the contrary, suppose that there is an odd multiple of $p_n$ greater than $p_n(2(m_i+1)-p_n)$ between ${m_i}^2$ and $(m_i+1)^2$. Then this odd multiple of $p_n$ would be greater than or equal to $p_n(2(m_i+1)-p_n) +2p_n$ and the difference between the last and the first multiple of $p_n$ between ${m_i}^2$ and $(m_i+1)^2$ would be, in light of $(12)$, greater than or equal to
\begin{center}
$p_n(2(m_i+1)-p_n) +2p_n-p_n(2m_i-p_n)-p_n=3p_n$. 
\end{center}
This is impossible by Lemma $2$, where it is shown that there is no gap of $3p_n$ between any two consecutive squares less than or equal to $m^2$. Therefore, there is no odd multiple of $p_n$ greater than $p_n(2(m_i+1)-p_n)$ between ${m_i}^2$ and $(m_i+1)^2$. Statement \textbf{b)} is justified.  \\
Observations \textbf{a)} and \textbf{b)} imply that
\begin{align}
 L(p_n, {(m_i+1)}^2)&=p_n(2(m_i+1)-p_n).
\end{align}
\textbf{Conclusion:} By the principle of induction, $L(p_n, {m_i}^2)=p_n(2m_i-p_n)$ for all integers $m_i$ in the interval $(p_n, m]$. That is, $p_n(2m_i-p_n)$ is the largest multiple of $p_n$ no exceeding ${m_i}^2$ for all integers $m_i$ in the interval $(p_n, m]$.\\
\end{proof}
\begin{theorem}
If $p_n$ and $p_{n+1}$ are two consecutive odd primes and $m$ to be their midpoint, then $p_{n+1}-p_{n} <2\sqrt{p_n}.$
\end{theorem}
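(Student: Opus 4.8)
The plan is to extract the bound directly from Lemma 3 by evaluating it at the right endpoint $m_i = m$. First I would observe that since $p_n$ and $p_{n+1}$ are both odd, their sum is even, so the midpoint $m = (p_n + p_{n+1})/2$ is itself an integer; moreover $p_{n+1} > p_n$ forces $m > p_n$, so $m$ lies in the interval $(p_n, m]$ and Lemma 3 applies to it. Evaluating $L(p_n, m^2) = p_n(2m - p_n)$ and using $2m = p_n + p_{n+1}$, hence $2m - p_n = p_{n+1}$, gives
\[
L(p_n, m^2) = p_n\,p_{n+1}.
\]

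Next I would exploit the defining property of $L$. Since $p_n p_{n+1}$ is the \emph{largest} multiple of $p_n$ not exceeding $m^2$, the very next multiple of $p_n$ must strictly exceed $m^2$, which gives
\[
p_n(p_{n+1} + 1) > m^2.
\]
Substituting $m^2 = (p_n + p_{n+1})^2/4$ and clearing the denominator yields $4 p_n(p_{n+1}+1) > (p_n + p_{n+1})^2$. Expanding the right-hand side and moving every term to one side regroups the surviving quantities into the perfect square $(p_{n+1} - p_n)^2$, leaving
\[
4 p_n > (p_{n+1} - p_n)^2.
\]
Taking positive square roots, which is legitimate because $p_{n+1} > p_n$, then produces $p_{n+1} - p_n < 2\sqrt{p_n}$, the claimed bound.

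As for where the difficulty lies: essentially all of the genuine content has already been absorbed into Lemma 3, whose inductive proof is the place where the Parity conjecture actually enters. Granting Lemma 3, the theorem reduces to a short algebraic manipulation, and the only step demanding genuine care is the strict inequality $p_n(p_{n+1}+1) > m^2$, which must be justified from the word \emph{largest} in the definition of $L$ rather than from any asymptotic growth estimate. Beyond that, I expect no obstacle apart from confirming that $m_i = m$ is a permissible value for Lemma 3, namely that $m$ is an integer and that the half-open interval $(p_n, m]$ includes its right endpoint, both of which follow from the parity of the two primes and the precise wording of Lemma 3.
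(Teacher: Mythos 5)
Your proposal is correct and follows essentially the same route as the paper: evaluate Lemma 3 at $m_i = m$ to get $L(p_n, m^2) = p_n p_{n+1}$, use the maximality of $L$ to obtain the strict inequality $m^2 < p_n(p_{n+1}+1)$, and then rearrange $4p_n(p_{n+1}+1) > (p_n+p_{n+1})^2$ into $(p_{n+1}-p_n)^2 < 4p_n$. Your added check that $m$ is an integer lying in $(p_n, m]$ (so that Lemma 3 genuinely applies) is a point the paper passes over silently, but otherwise the two arguments coincide step for step.
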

\begin{proof}
Lemma $3$ holds for every integer $m_i$ in the interval $(p_n, m]$, in particular, for the midpoint $m$ of $p_n$ and $p_{n+1}$. Thus $p_n(2m-p_n)$ is the largest multiple of $p_n$ no exceeding ${m}^2$. Since $2m-p_n=p_{n+1}$, it follows that
\begin{center}
\textbf{a) } $p_np_{n+1}$ is the largest multiple of $p_n$ no exceeding ${m}^2$. 
\end{center}
Statement \textbf{a)} implies that 
\begin{align}
 p_np_{n+1}+p_n &> m^2.\\ 
m^2&< p_n(p_{n+1}+1).\\
(\frac{p_{n}+p_{n+1}}{2})^2 &<  p_{n}(p_{n+1}+1) .\\
{p_{n}}^2+2p_{n}p_{n+1}+{p_{n+1}}^2&< 4p_{n}p_{n+1}+4p_{n}.\\
{p_{n}}^2+2p_{n}p_{n+1}+{p_{n+1}}^2- 4p_{n}p_{n+1}&<4p_{n}.\\
(p_{n+1}-p_{n})^2 &< 4p_{n}.\\
p_{n+1}-p_{n} &<2\sqrt{p_n}.
\end{align}
\end{proof}
\section{The Legendre's Conjecture}
The conjecture states that there is a prime between $N^2$ and $(N+1)^2$ for all positive integers $N$. By $(20)$, there is prime in the interval $(p_n, p_n+2\sqrt{p_n})$. \\
Choose a positive integer $N$. Let $p_n$ be the largest prime less than $N^2$. Then 
\begin{align}
p_n <N^2<p_{n+1}&<p_n+2\sqrt{p_n}.\\
&<N^2+2N.\\
N^2<p_{n+1}&<(N+1)^2.
\end{align}
$(20)$ implies the Legendre's conjecture.
\section{Andrica's Conjecture}
Andrica's conjecture states that $\sqrt{p_{n+1}}-\sqrt{p_n}<1$ for all positive integers $n$. That is
\begin{align}
\sqrt{p_{n+1}-p_n+p_n}&<\sqrt{p_n}+1.
\end{align}
Taking the square of $(24)$ gives
\begin{align}
p_{n+1}-p_n &< 2\sqrt{p_n}+1.
\end{align}
$(20)$ implies Andrica's conjecture.
\section{Brocard's conjecture} The conjecture states the existence of at least $4$ primes in the interval $({p_n}^2, {p_{n+1}}^2)$.
\\ Unfortunately, the Parity conjecture is not strong enough to imply Brocard's conjecture. One may need to speculate strongly on distance (not just parity) in order to prove Brocard's conjecture or Oppermann's conjecture. 
\section{Oppermann's conjecture}
The conjecture \cite{Oppermann} states the following. \\For every integer $N > 1$, there is at least one prime between $N(N-1)$ and $N^2$, and at least another prime between
$N^2$ and $N(N + 1)$.\\
Same here, the Parity conjecture alone is not strong enough to wrestle down Oppermann's conjecture. 
\section {Recap: We have shown that}
If $L(p_n, {m_i}^2)$ is odd for every integer $m_i$ in the interval $(p_n, m]$ (Parity conjecture), then \\
$L(p_n, {m_i}^2) = p_n(2m_i-p_n)$ for every integer $m_i$ in the interval $(p_n, m]$ (Lemma $3$).\\
A particular case of Lemma $3$ is $m_i=m$, it implies that $L(p_n, m^2) = p_n(2m-p_n)=p_np_{n+1}$. That is statement a) in Theorem $1$ which, in view of $(14)$, can be read as, $p_n$ is an upper bound of the distance between $m^2$ and $p_np_{n+1}$.  This upper bound (that is, $p_n$), ultimately yields $(20)$ where we obtain the main result. 

\section{Discussion}
\textbullet \: The multiples of $p_n$ exhibited by the Parity conjecture are either,\\
{1)}  mixed, that is some are odd and others are even, or they are\\
{2)} all odd, or\\
{3)} all even.\\
Outcome {3)} is impossible since the base case of the induction in Lemma $3$ shows that the first of these multiples of $p_n$ is equal to $p_n(p_n+2)$ that is odd. We are left with two outcomes. The Parity conjecture is based on the second outcome, but we may ask the following questions.\\
\begin{center}
\textup{How a change of parity could affect our result?}
\textup{If the first outcome was true, how would this change the result already obtained under the assumption of the second outcome?}
\textup{Is the gap between two consecutive primes known to be in function of the parity of $L(p_n, {m_i}^2)$? \\If not, does it mean that our result also holds for the first outcome?}\\
\end{center}
\textup{\\}
\textbullet \: The Parity conjecture is in a short interval $(p_n, m]$, indeed it does not hold for all integers $m_i$ in the interval $(m, p_{n+1})$. As an example,  take the pair of consecutive primes $(23, 29)$, their midpoint is $m=26$. The Parity conjecture states that, the largest multiple of $23$ not exceeding ${m_i}^2$ is odd for all $m_i$ in the interval $(23, 26]$; this is true. Now say we go past $26$, choose $m_i =28$, it is clear that the largest multiple of $23$ not exceeding ${m_i}^2=28^2=784$, is $23*33+23=782$, that is not odd. Hence, the Parity conjecture is in a short but solid interval where it is believed to be true. 
\section{Conclusion} The Parity conjecture gives a result that is stronger than Legendre's conjecture and Andrica's conjecture, but weaker than Brocard's conjecture and Oppermann's conjecture. Nevertheless, we believe that the Parity conjecture is the easiest to prove among the $5$ conjectures,  and it is definitely much easier to prove than the Riemann hypothesis. \\
We have reduced a difficult problem, that is, the difference between two consecutive primes, to a much simpler problem where the focus is not in the gap or location of these primes, but simply in whether some particular numbers are even or odd.

\end{document}